\newcommand{\cal}{\mathcal}
\def\epsilon{\varepsilon}
\def\phi{\varphi}
\def\subset{\subseteq}
\newcommand{\card}{\mbox{card}}
\newcommand{\bvec}{\overleftarrow}
\newcommand{\R}{\mathbb R}
\newcommand{\Z}{\mathbb Z}
\def\strutdepth{\dp\strutbox}
\def \ss{\strut\vadjust{\kern-\strutdepth \sss}}
\def \sss{\vtop to \strutdepth{
\baselineskip\strutdepth\vss\llap{$\diamondsuit\;\;$}\null}}
\def\strutdepth{\dp\strutbox}
\def \sst{\strut\vadjust{\kern-\strutdepth \ssss}}
\def \ssss{\vtop to \strutdepth{
\baselineskip\strutdepth\vss\llap{$\spadesuit\;\;$}\null}}
\def\strutdepth{\dp\strutbox}
\def \ssh{\strut\vadjust{\kern-\strutdepth \sssh}}
\def \sssh{\vtop to \strutdepth{
\baselineskip\strutdepth\vss\llap{$\heartsuit\;\;$}\null}}
\def\qed{\hfill\rlap{$\sqcup$}$\sqcap$\par}
\def\strutdepth{\dp\strutbox}
\def \ss{\strut\vadjust{\kern-\strutdepth \sss}}
\def \sss{\vtop to \strutdepth{
\baselineskip\strutdepth\vss\llap{$\diamondsuit\;\;$}\null}}
\def\strutdepth{\dp\strutbox}
\def \sst{\strut\vadjust{\kern-\strutdepth \ssss}}
\def \ssss{\vtop to \strutdepth{
\baselineskip\strutdepth\vss\llap{$\spadesuit\;\;$}\null}}
\def\qed{\hfill\rlap{$\sqcup$}$\sqcap$\par}
\newtheorem{thm}{Theorem}%[section]
\newtheorem{cor}[thm]{Corollary}
\newtheorem{prop}[thm]{Proposition}
\theoremstyle{definition}
\newtheorem{rem}[thm]{Remark}
\newtheorem{defn-rem}[thm]{Definition-Remark}
\theoremstyle{remark}
\numberwithin{equation}{section}
\begin{document}

\author[N.~Bedaride]{Nicolas B\'edaride}
\author[A.~Hilion]{Arnaud Hilion}
\author[M.~Lustig]{Martin Lustig}

\address{\tt 
Aix Marseille Universit\'e, CNRS, Centrale Marseille, I2M UMR 7373,
13453  Marseille, France}
\email{\tt Nicolas.Bedaride@univ-amu.fr}
\email{\tt Arnaud.Hilion@univ-amu.fr}
\email{\tt Martin.Lustig@univ-amu.fr}

\title[Measures on subshifts]{Invariant measures on finite rank subshifts}
 
\begin{abstract} 
In this note we show that for any subshift $X$ of finite $S$-rank every invariant measure $\mu$ is determined by its values on finitely many cylinders. Under mild conditions these cylinders are given by the letters of the alphabet in question.
\end{abstract}

\subjclass[2010]{Primary 37B10, Secondary 37A25, 37E25}
%{Primary 20F36, Secondary 20E36, 57M05} 
%\subjclass[2000]{Primary 20F, Secondary 20E, 57M}
%, 37B, 37D}
 
\keywords{$S$-adic development, finite rank, cylinder measures} 
\maketitle 

For any finite discrete set $\cal A = \{a_1, \ldots, a_d\}$ the infinite product $\cal A^\Z$ is compact, with respect to the product topology, and indeed it is a Cantor set. It is naturally equipped with a {\em shift map} $T$, induced by subtracting 1 from the coordinate indices of $\cal A^\Z$, and $T$ acts as homeomorphism on $\cal A^\Z$. A subset $X \subset \cal A^\Z$ is called a {\em subshift} (over $\cal A$) if $X$ is closed and if $T(X) = X$.

An {\em invariant measure} $\mu$ on $X$ is a finite $T$-invariant Borel measure on $\cal A^\Z$ with support contained in $X$. Any such $\mu$ is determined by its values $\mu([w])$ on the family of {\em cylinders} $[w]$, for all words $w = y_1 y_2 \ldots y_n$ (with $y_i \in \cal A$) in the free monoid $\cal A^*$ over the above {\em alphabet} $\cal A$. Here $[w]$ consists of all biinfinite words $\ldots x_{j-1} x_j x_{j+1} \ldots  \in \cal A^\Z$ with $x_i = y_i$ for all $i = 1, \ldots, n$. The integer $n \geq 0$ is called the {\em length} of $w$ and is denoted by $|w|$.

For any subshift $X \subset \cal A^\Z$ we consider the linear surjective map $\zeta$ from the cone $\cal M(X)$ of invariant measures on $X$ onto cone of {\em letter measures} $\cal C(X) \subset \R^\cal A$, given by $\mu \mapsto (\mu([a_1]), \ldots, \mu([a_d]))$. 

\smallskip

A very convenient tool to define a subshift $X$ is a {\em directive sequence} ${\bvec \sigma} = (\sigma_n)_{n \geq 0}$ of monoid morphisms $\sigma_n: \cal A^*_{n+1} \to \cal A^*_n$, with $\cal A_n = \{a_{1, n}, \ldots, a_{d(n), n}\}$ for all $n \geq 1$ and $\cal A_0 = \cal A$. Here the subshift $X$ {\em generated by} $\bvec \sigma$ consists of all biinfinite words $x = \ldots x_{n-1} x_n x_{n+1} \ldots  \in \cal A^\Z$ such that every finite factor $x_j x_{j+1} \ldots x_k$ of $x$ occurs also as factor in some $\sigma_0 \circ \ldots \circ \sigma_{n-1}(a_{k, n})$. 

The directive sequence $\bvec \sigma$ is also called an {\em $S$-adic development} of $X$ if $S$ is a (possibly infinite) set of monoid morphisms which contains all $\sigma_n$ that occur in $\bvec \sigma$. The set $S$ is of particular interest if the cardinality of all {\em level alphabets} $\cal A_n$ is bounded; in this case we can identify all level alphabets $\cal A_n$ with a single alphabet $\cal A$.

A directive sequence ${\bvec \sigma} = (\sigma_n)_{n \geq 0}$ is called {\em everywhere growing} if the sequence of {\em minimal lengths} $\beta_-(n) = \min\{|\sigma_0 \circ \ldots \circ \sigma_{n-1}(a_{k, n})| \mid a_{k, n} \in \cal A_n\}$ tends to $\infty$ for increasing $n$. Every subshift $X$ possesses an everywhere growing $S$-adic development (see Monteil's Theorem 4.3 of \cite{BD} or Proposition 2.7 of \cite{BHL1}), and it makes sense to think of $X$ as ``small'' if its possesses a directive sequence which is both, everywhere growing and of bounded cardinalities $\card (\cal A_n)$. We say that such a subshift $X$ has {\em finite $S$-rank} (since it seems closely related to what has been called ``finite (topological) rank'' in the context of Bratteli-Vershik systems, see for instance \cite{BD}). 

To think of $X$ as ``small'' 
is justified for instance by the well known fact 
that for any subshift $X$ of finite $S$-rank the cone $\cal M(X)$ is of finite dimension. Indeed, this dimension is equal to 
the number $e(X)$ of invariant ergodic probability measures on $X$, 
and (see for instance Remark 4.3 of \cite{DLR}) $e(X)$ is bounded above by the cardinality $\card (\cal A)$ of the alphabet $\cal A$ which is used by any everywhere growing $S$-adic development ${\bvec \sigma} = (\sigma_n)_{n \geq 0}$ of $X$. Hence one has 
$$c(X) \leq e(X) \leq \card(\cal A) \, ,$$ 
where we denote by $c(X)$ the dimension of the above cone $\cal C(X)$ of  letter measures.

For any subshift $X$, given as before by a directive sequence $\bvec \sigma = (\sigma_m)_{m \geq 0}$, we will also consider, for any $n \geq 0$, the dimension $c_n$ of the letter cone $\cal C(X_n)$ of the {\em intermediate level-$n$ subshift} $X_n$. The latter is defined by truncating the given directive sequence $\bvec \sigma$ at level $n$; in other words, $X_n$ is the subshift generated by the directive sequence $(\sigma_m)_{m \geq n}$. In the special (but rather frequently occurring) case where $c_n = c(X)$ for all $n \geq 0$, the directive sequence $\bvec \sigma$ is called {\em thin}. We say that a subshift $X$ is {\em thin} if $X$ possesses an $S$-adic development which is everywhere growing, has bounded level alphabet cardinalities, and is thin.

\begin{thm}
\label{main}
Let $X$ be a thin subshift over a finite alphabet $\cal A$. Then any two invariant measures $\mu_1$ and $\mu_2$ on $X$ are equal if and only if one has $\mu_1([a_k]) = \mu_2([a_k])$ for the finitely many cylinders $[a_k]$ given by all $a_k \in \cal A$.
\end{thm}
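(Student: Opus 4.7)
The plan is to prove that $\zeta \colon \cal M(X) \to \cal C(X)$ is injective: extending $\zeta$ linearly to the real span of $\cal M(X)$ and applying this to the signed measure $\mu_1 - \mu_2$ then yields the theorem. So let $\mu$ be a signed $T$-invariant Borel measure on $X$ with $\zeta(\mu) = 0$; I will show $\mu([w]) = 0$ for every finite word $w$. Fix an everywhere growing $S$-adic development $\bvec \sigma = (\sigma_n)_{n \geq 0}$ of $X$ witnessing thinness, write $\tau_{0,n} = \sigma_0 \circ \cdots \circ \sigma_{n-1}$, and let $M_{0,n}$ be the transition matrix with entries $M_{0,n}(b,a) = |\tau_{0,n}(a)|_b$.

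The principal tool is the ``desubstitution'' correspondence $\pi_n \colon \cal M(X) \to \cal M(X_n)$, which for everywhere growing developments is a natural bijection of cones (a form of recognizability) extending linearly to signed measures. It gives the commutative relation $\zeta(\mu) = M_{0,n}(\zeta_n(\pi_n \mu))$, where $\zeta_n \colon \cal M(X_n) \to \cal C(X_n)$ is the level-$n$ letter-frequency projection; indeed, for $\mu \geq 0$ the identity $\mu([b]) = \sum_{a \in \cal A_n} |\tau_{0,n}(a)|_b \, (\pi_n \mu)([a])$ is just the level-$n$ tower count of occurrences of the letter $b$. Thinness now enters: since $\dim \cal C(X_n) = \dim \cal C(X) = c$ for every $n$, the map $M_{0,n} \colon \text{span}(\cal C(X_n)) \to \text{span}(\cal C(X))$ is a surjection between $c$-dimensional spaces, hence a linear isomorphism. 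Consequently, $\zeta(\mu) = 0$ upgrades to $\zeta_n(\pi_n \mu) = 0$ for every $n \geq 0$, i.e.\ $(\pi_n \mu)([a]) = 0$ for every $a \in \cal A_n$.

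To finish, fix a word $w$ of length $k$ and choose $n$ with $\beta_-(n) > k$. Decomposing $X$ by the level-$n$ tower structure, any occurrence of $w$ either lies inside a single block $\tau_{0,n}(a)$ or straddles two consecutive blocks $\tau_{0,n}(a)\tau_{0,n}(b)$, so that
$$\mu([w]) \;=\; \sum_{a \in \cal A_n} |\tau_{0,n}(a)|_w \, (\pi_n \mu)([a]) \;+\; \sum_{a,b \in \cal A_n} S_n(w,a,b) \, (\pi_n \mu)([ab])$$
with straddling counts $0 \leq S_n(w,a,b) \leq k-1$. The first sum vanishes by the previous paragraph, and the second is bounded in absolute value by $(k-1)\,|\pi_n \mu|(X_n)$. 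Applying the tower-mass identity $\mu(X) = \sum_a |\tau_{0,n}(a)| \, (\pi_n \mu)([a])$ separately to the positive and negative parts of $\mu$ yields $|\pi_n \mu|(X_n) \leq |\mu|(X)/\beta_-(n) \to 0$. Since $\mu([w])$ does not depend on $n$, it must be zero.

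The main obstacle is justifying the desubstitution $\pi_n$ as a linear isomorphism of cones in the everywhere growing $S$-adic setting — essentially a recognizability statement for the directive sequence. Once this is in hand, thinness delivers for free the crucial dimension equality that makes $M_{0,n}$ invertible on spans, and the remainder is the standard straddling estimate above.
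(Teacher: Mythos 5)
Your argument is correct in outline but takes a genuinely different, more hands-on route than the paper, which simply invokes Corollary 10.4 of \cite{BHL1}: under the hypothesis that all $c_n$ coincide, the maps $\zeta_n$ (in particular $\zeta_0 = \zeta$) are already known there to be bijective, and injectivity of $\zeta$ is the theorem. You essentially reprove that corollary from scratch: lift the measures to level $n$, use the dimension equality to force the level-$n$ letter vector of $\mu_1 - \mu_2$ to vanish, then kill every cylinder $[w]$ by the straddling estimate together with $\beta_-(n) \to \infty$. The decomposition of $\mu([w])$ into in-block and straddling occurrences, the bound $S_n(w,a,b) \leq k-1$, the mass estimate of order $1/\beta_-(n)$, and the use of thinness (a surjection between $c$-dimensional spans is an isomorphism) are all in order. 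What your route buys is a nearly self-contained argument whose only external input is the existence of level-$n$ lifts of measures; what the paper's route buys is brevity, at the price of citing the full tower machinery of \cite{BHL1}.

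The one step you must repair is the justification of $\pi_n$. As stated --- that for everywhere growing developments desubstitution is ``a natural bijection of cones (a form of recognizability)'' --- this is false: recognizability is not automatic for everywhere growing directive sequences, and without it desubstitution of measures need not be well defined, let alone bijective. Fortunately your proof never uses injectivity of $\pi_n$, nor that it is a map at all. All you need is that each $\mu_i$ ($i = 1, 2$) admits \emph{some} lift $\nu_n^{(i)} \in \cal M(X_n)$ whose pushforward under $\tau_{0,n}$ is $\mu_i$, i.e.\ surjectivity of the induced map $\cal M(X_n) \to \cal M(X)$; this is Proposition \ref{measure-surj}\,(2), which rests on the surjectivity of $\frak m$ in \cite{BHL1} (or on an elementary argument), not on recognizability. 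Setting $\nu_n = \nu_n^{(1)} - \nu_n^{(2)}$ and running your computation with $\nu_n$ in place of $\pi_n\mu$ gives everything: the identity $\zeta(\mu_i) = M_{0,n}\,\zeta_n(\nu_n^{(i)})$, the tower decomposition of $\mu([w])$, and the mass bound --- for the latter, apply the tower-mass identity to $\nu_n^{(1)}$ and $\nu_n^{(2)}$ separately rather than to the Jordan decomposition of $\mu_1 - \mu_2$, whose positive and negative parts need not themselves lift. Note finally that the surjectivity of $M_{0,n}$ on letter cones, which you need for the isomorphism-on-spans step, follows from the same lifting statement, so the two nontrivial inputs you invoke collapse to one.
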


Any monoid morphism $\sigma: \cal A^* \to \cal {A'}^*$, for finite alphabets $\cal A = \{a_1, \ldots, a_d\}$ and $\cal {A'} = \{a'_1, \ldots, a'_{d'}\}$ defines an {\em incidence matrix} $M(\sigma) = (|\sigma(a_j)|_{a'_i})_{i, j}$ of seize $d' \times d$, where $|w|_x$ denotes the number of occurrences of the letter $x$ in the word $w$. Since $M(\sigma)$ describes also the linear map $\R^d \to \R^{d'}$ on the associated vector spaces that is given by the abelianization of monoid morphism $\sigma$, one derives directly from Theorem \ref{main}:

\begin{cor}
\label{cor}
Let $\cal A$ be a finite alphabet, and for any $n \geq 0$ let $\sigma_n: \cal A \to \cal A$ be a monoid morphism with invertible incidence matrix $M(\sigma_n)$. If the directive sequence $(\sigma_n)_{n \geq 0}$ is  everywhere growing, then any invariant measure $\mu$ on the subshift $X$ generated by this sequence is determined by the evaluation of $\mu$ on the letter cylinders, i.e. by the values $\mu([a_k])$ for all $a_k \in \cal A$.
\end {cor}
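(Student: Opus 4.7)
The idea is to derive the statement as an immediate consequence of Theorem \ref{main}, by checking that the subshift $X$ is thin in the sense defined just before the theorem. Since the given directive sequence $\bvec \sigma = (\sigma_n)_{n \geq 0}$ is everywhere growing by hypothesis and all level alphabets coincide with $\cal A$ (so their cardinalities are trivially bounded), the only non-trivial point is to verify the thin condition $c_n = c(X)$ for every $n \geq 0$.

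To this end I would invoke the standard correspondence, built into any $S$-adic presentation, between invariant measures $\mu$ on $X$ and invariant measures $\mu_n$ on the intermediate level-$n$ subshift $X_n$ generated by the truncation $(\sigma_m)_{m \geq n}$. At the level of letter vectors this correspondence is governed by the incidence-matrix recursion
$$v_\mu \;=\; M(\tau_n)\, v_{\mu_n}\, , \quad \text{with } \tau_n = \sigma_0 \circ \cdots \circ \sigma_{n-1}\, ,$$
so that $M(\tau_n) = M(\sigma_0)\, M(\sigma_1) \cdots M(\sigma_{n-1})$, where $v_\mu = (\mu([a_k]))_{a_k \in \cal A} \in \cal C(X)$ and similarly $v_{\mu_n} \in \cal C(X_n)$. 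This is simply the expression, via abelianization of $\tau_n$, of how the number of occurrences of a letter $a_k$ at level $0$ is recovered from its occurrences in the $\tau_n$-images of the letters at level $n$.

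Under the hypothesis every factor $M(\sigma_i)$ is invertible, so the product $M(\tau_n)$ is an invertible linear endomorphism of $\R^{\cal A}$. It therefore restricts to a linear isomorphism between the linear spans of $\cal C(X_n)$ and of $\cal C(X)$ (the latter surjectivity coming from the fact that every $\mu \in \cal M(X)$ arises from some $\mu_n \in \cal M(X_n)$ via the above correspondence). This forces $c_n = c(X)$ for all $n \geq 0$, so $\bvec \sigma$ is thin, $X$ is a thin subshift, and Theorem \ref{main} delivers the corollary.

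The only genuine obstacle is to pin down the measure-theoretic bijection $\mu \leftrightarrow \mu_n$ and the transformation rule $v_\mu = M(\tau_n)\, v_{\mu_n}$; these are classical features of $S$-adic presentations but must either be cited from the literature or verified directly from the construction of the subshift generated by $\bvec \sigma$. Once granted, the reduction to Theorem \ref{main} is purely linear-algebraic.
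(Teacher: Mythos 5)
Your proposal is correct and follows essentially the same route as the paper: the paper's proof is the one-line observation that invertibility of each $M(\sigma_n)$ forces $c_{n+1}=c_n$, hence the sequence is thin and Theorem \ref{main} applies. The surjectivity of the letter-cone maps $M(\sigma_n)\colon \cal C_{n+1}\to\cal C_n$ that you flag as the ``only genuine obstacle'' is exactly the part of the main result of \cite{BHL1} already invoked in the proof of Theorem \ref{main}, so nothing further needs to be supplied.
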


The conclusion of Corollary \ref{cor} has recently been proved by Berth\'e et al. 
under distinctly more restrictive hypotheses (see Corollary 4.2 of \cite{B+Co}). This sparked among the authors the feeling that the results of our papers \cite{BHL1} and \cite{BHL2} 
are yet not well enough divulged in the symbolic dynamics community. We hence decided to write this note in order to show how the above statements follow as directs consequences of our previous results.

\begin{proof}[Proof of Theorem \ref{main}]
For any $S$-adic development $\bvec \sigma = (\sigma_n: \cal A^*_{n+1} \to \cal A^*_n)_{n \geq 0}$ of $X$ let $\zeta_n: \cal M(X) \to C(X_n) =: \cal C_n \subset \R^{\cal A_n}$ be the map for the level $n$ subshift $X_n$ analogue to the above map $\zeta$ for $X$. The main result of \cite{BHL1} (see also Theorem 3.2 of \cite{BHL2}) shows that for any everywhere growing $\bvec \sigma$ and any level $n \geq 0$ the linear map $M(\sigma_n)$ maps $\cal C_{n+1}$ onto $\cal C_n$, and that for the resulting inverse limit cone $\cal V$ of the $C_n$ there is a surjective linear map $\frak m: \cal V \to \cal M(X)$ which splits over the maps $\zeta_n$. 

In Corollary 10.4 of \cite{BHL1} it is shown that, if all $\cal C_n$ have the same finite dimension $c_n = c(X)$, then all of the maps $\zeta_n$ are bijective. For $n = 0$ this gives the claimed statement.
\end{proof}

\begin{rem}
\label{new-1}
The proof (and hence the statement of Theorem \ref{main}) is correct for the more general case where the $S$-adic development is not of bounded level alphabet cardinality, as long as it is everywhere growing and all $c_n$ are equal.
\end{rem}

\begin{proof}[Proof of Corollary \ref{cor}]
From the hypothesis that $M(\sigma_n)$ is invertible it follows directly that $c_{n+1}= c_n$, so that the hypotheses of Theorem \ref{main} are satisfied.
\end{proof}

We finish this note with the observation that for any subshift $X$ of finite $S$-rank and any everywhere growing $S$-adic development of $X$ over $\cal A$ it follows directly from the finiteness of $\cal A$ that there is a level $n_0 \geq 0$ with $c_n = c_{n_0}$ for all $n \geq n_0$. It follows from Theorem \ref{main} that any invariant measure $\mu_{n_0}$ on the intermediate subshift $X_{n_0}$ is determined by the values $\mu_{n_0}([a_k])$, for any $a_k \in \cal A$. Thus we can apply Proposition \ref{measure-surj} below to conclude: 

\smallskip

{\em Any invariant measure $\mu$ on $X$ is determined by the values $\mu([\sigma_0 \circ \ldots \circ \sigma_{n_0 - 1}(a_k)])$ for all $a_k \in \cal A$. }

\smallskip

Although we don't know a precise reference, the following basic fact is probably well known to the experts:

\begin{prop}
\label{measure-surj}
Let $\sigma: \cal A^* \to \cal B^*$ be a non-erasing monoid morphism (i.e. $\sigma(a_k)$ is non-trivial for any $a_k \in \cal A$), and let $X$ be any subshift over $\cal A$. Let $Y$ be the subshift over $\cal B$ generated by the set $\sigma(X)$. Then one has:

\begin{enumerate}
\item
The map $\sigma$ induces a well defined map $\sigma_\cal M: \cal M(X) \to \cal M(Y)$.
\item
The map $\sigma_\cal M$ is surjective.
\item
The map $\sigma_\cal M$ is functorial: 
For any second non-erasing monoid morphism $\sigma'$ one has $(\sigma \sigma')_\cal M = \sigma_\cal M \sigma'_\cal M$.
\qed
\end{enumerate}
\end{prop}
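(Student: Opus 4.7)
The plan is to realize $\sigma_{\cal M}$ via the Kakutani tower associated with $\sigma$. Since $\sigma$ is non-erasing, the function $h: X \to \Z_{>0}$ defined by $h(x) = |\sigma(x_0)|$ is continuous (it depends only on $x_0$) and strictly positive, and it is bounded because $\cal A$ is finite. Form the tower
$$\tilde X = \{(x, i) \mid x \in X, \ 0 \leq i < h(x)\}$$
with tower shift $\tilde T$ given by $(x, i) \mapsto (x, i+1)$ when $i+1 < h(x)$ and by $(x, i) \mapsto (Tx, 0)$ otherwise; this is a homeomorphism of a compact space. Let $\pi: \tilde X \to \cal B^\Z$ be the natural concatenation map, sending $(x, 0)$ to the biinfinite word obtained by placing $\sigma(x_0)$ at coordinates $0, \ldots, h(x) - 1$ and extending by $\sigma(x_k)$ for all $k \in \Z$, and sending $(x, i)$ to $T^i \pi(x, 0)$. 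A direct check shows that $\pi$ is continuous and satisfies $\pi \circ \tilde T = T \circ \pi$; and its image equals $Y$ because it is compact, shift-invariant, and contains the generating set $\sigma(X)$.

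For part (1), given $\mu \in \cal M(X)$, the standard tower construction produces a $\tilde T$-invariant Borel measure $\tilde\mu$ on $\tilde X$, determined on each level by $\tilde\mu(A \times \{i\}) = \mu(A \cap h^{-1}(\{i+1, i+2, \ldots\}))$. Setting $\sigma_{\cal M}(\mu) := \pi_* \tilde\mu$ yields an element of $\cal M(Y)$. Part (3) (functoriality) follows either by direct verification on cylinder sets in $\cal B^*$ using the explicit formula above, or more conceptually by observing that the tower for a composition $\sigma \sigma'$ factors as an iterated Kakutani tower and that the corresponding concatenation maps compose in the obvious way.

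For part (2), which is the real content, take any $\nu \in \cal M(Y)$. Choose any Borel measure $\eta$ on $\tilde X$ with $\pi_* \eta = \nu$ (which exists because $\pi$ is a continuous surjection between compact metric spaces). By weak-$*$ compactness, the Cesaro averages $\frac{1}{N} \sum_{n=0}^{N-1} (\tilde T^n)_* \eta$ admit a subsequential limit $\tilde\nu$, which is $\tilde T$-invariant and still satisfies $\pi_* \tilde\nu = \nu$ since $\nu$ is already $T$-invariant. Define $\mu := \tilde\nu|_{X \times \{0\}}$, viewed as a measure on $X$. The first-return map of $\tilde T$ to the base $X \times \{0\}$ is precisely $T$, so $\mu$ is $T$-invariant; and by the Kakutani skyscraper identity the tower extension of $\mu$ recovers $\tilde\nu$ level-by-level, so $\sigma_{\cal M}(\mu) = \pi_* \tilde\nu = \nu$. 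The main obstacle is this last step: verifying cleanly that the tower-to-base correspondence is a bijection between invariant measures when the roof function $h$ is variable (though bounded and locally constant). This is classical, but it requires careful measure-theoretic bookkeeping to ensure that no mass is lost or miscounted when decomposing $\tilde\nu$ across the levels of the skyscraper.
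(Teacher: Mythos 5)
Your argument is correct, and it is essentially the ``alternative (but more elaborate) proof \dots by elementary methods'' that Remark~\ref{new-2}(2) alludes to without carrying out. The paper's own route (sketched in that remark, the proposition itself being stated with an immediate \textit{qed}) is to prepend $\sigma$ to an everywhere growing directive sequence for $X$ --- non-erasingness guarantees the prolonged sequence for $Y$ is still everywhere growing --- and then invoke the surjectivity of the map $\mathfrak m$ from the inverse-limit cone onto $\cal M(Y)$ established in \cite{BHL1}. Your Kakutani-skyscraper construction is genuinely different: it is self-contained, makes no appeal to the $S$-adic machinery, and, as the authors themselves note is desirable, does not depend on any choice of directive sequence for $X$; it also yields an explicit cylinder formula for $\sigma_{\cal M}$ (e.g.\ $\sigma_{\cal M}(\mu)([b])=\sum_a |\sigma(a)|_b\,\mu([a])$ on letters), from which functoriality is transparent. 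What the paper's route buys is brevity, since the heavy lifting is already done in \cite{BHL1}. One comment on the step you flag as the ``main obstacle'': it is less delicate than you suggest. Because $h$ is locally constant, $\tilde T$ is a homeomorphism of the compact space $\tilde X$, and level $i$ over a base set $A$ is exactly $\tilde T^i\bigl((A\cap\{h>i\})\times\{0\}\bigr)$; hence $\tilde T$-invariance of $\tilde\nu$ immediately gives $\tilde\nu\bigl((A\cap\{h>i\})\times\{i\}\bigr)=\mu(A\cap\{h>i\})$, i.e.\ $\tilde\nu$ coincides level-by-level with the tower extension of $\mu$, and no further bookkeeping is needed.
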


\begin{rem}
\label{new-2}
(1)
The map $\sigma_\cal M$ isn't quite as natural as one may wish it to be: If $\mu$ is a probability measure (i.e. $\mu(X) = 1$), then the image measure $\sigma_\cal M(\mu)$ need not be a probability measure.

\smallskip
\noindent
(2)
A direct proof of Proposition \ref{measure-surj} follows from the above cited surjectivity of the map $\frak m$, since any everywhere growing directive sequence for $X$ is prolongated by $\sigma$ to an everywhere growing directive sequence for $Y$,
as long as $\sigma$ is non-erasing.

An alternative (but more elaborate) proof can be derived by elementary methods, which does not depend on the particular directive sequence chosen to generate the given subshift $X$.
\end{rem}

\end{document}